\def \N {\mathbb{N}}
\def \R {\mathbb{R}}
\newtheorem{theorem}{Theorem}
\newtheorem{lemma}[theorem]{Lemma}
\newtheorem{proposition}[theorem]{Proposition}
\theoremstyle{definition}
\theoremstyle{remark}
\numberwithin{equation}{section}
\newcommand{\intav}[1]{\mathchoice {\mathop{\vrule width 6pt height 3 pt depth  -2.5pt
\kern -8pt \intop}\nolimits_{\kern -6pt#1}} {\mathop{\vrule width
5pt height 3  pt depth -2.6pt \kern -6pt \intop}\nolimits_{#1}}
{\mathop{\vrule width 5pt height 3 pt depth -2.6pt \kern -6pt
\intop}\nolimits_{#1}} {\mathop{\vrule width 5pt height 3 pt depth
-2.6pt \kern -6pt \intop}\nolimits_{#1}}}
\title[Sharp regularity for the inhomogeneous pme]{Sharp regularity for the inhomogeneous\\ porous medium equation}
\author[D.J.~Ara\'ujo]{Dami\~ao J. Ara\'ujo}
\address{Department of Mathematics, Universidade Federal da Para\'iba, Campus Universit\'a\-rio, Jo\~ao Pessoa - PB - Brazil, 58059-900}{}
\email{araujo@mat.ufpb.br}
\author[A. F. Maia]{Anderson F. Maia}
\address{CMUC, Department of Mathematics, University of Coimbra, 3001-501 Coimbra, Portugal}{}
\email{anderson.maia@student.uc.pt}
\author[J.M.~Urbano]{Jos\'{e} Miguel Urbano}
\address{CMUC, Department of Mathematics, University of Coimbra, 3001-501 Coimbra, Portugal}{}
\email{jmurb@mat.uc.pt}
\begin{document}

\subjclass[2010]{Primary 35B65. Secondary 35K65, 76S05}

\keywords{Degenerate parabolic equations, porous medium equation, sharp H\"older regularity, intrinsic scaling.}

\begin{abstract} 
We show that locally bounded solutions of the inhomogeneous porous medium equation
$$u_{t} - {\rm div} \left( m |u|^{m-1} \nabla u \right) = f \in L^{q,r}, \quad m >1 ,$$
are locally H\"older continuous, with exponent
$$\gamma =\min \left\{ \frac{\alpha_{0}^-}{m}, \frac{[(2q - n)r -2q]}{q[(mr - (m-1)]} \right\},$$
where $\alpha_{0}$ denotes the optimal H\"older exponent for solutions of the homogeneous case.
The proof relies on an approximation lemma and geometric iteration in the appropriate intrinsic scaling.
\end{abstract}

\date{\today}

\maketitle

\section{Introduction} \label{sct intro}

The quest for obtaining sharp, optimal regularity results is one of the most exciting current trends in the study of nonlinear pdes. Degenerate parabolic equations are known to have H\"older continuous solutions (cf. \cite{DiBe93, Urb08}) under quite general structure assumptions, corresponding to the archetypal $p$-Laplace equation and porous medium equation (pme). The main difference between these two extensively studied pdes is that the first degenerates at points where the gradient of a solution vanishes and the second at points where this happens for the solution itself. The regularity theory for both equations has evolved in parallel and results for one normally have a counterpart for the other. Recently, in \cite{TeiUrb14}, the sharp H\"older exponent
$$ \frac{(pq-n)r-pq}{q[(p-1)r-(p-2)]} $$
for weak solutions of the inhomogeneous $p$-Laplace equation was determined precisely only in terms of $p$, the space dimension $n$ and the $L^{q,r}$-integrability of the source. Inspired by the recent breakthroughs in \cite{AraTeiUrb18, AraTeiUrb17, AraZhang17}, our goal in this paper is to do the same for the porous medium equation (cf. \cite{Vaz07}). 

Let $U \subset \R^n$ be open and bounded, $T>0$ and $U_T=U\times (0,T)$. We consider the prototype inhomogeneous equation
\begin{equation}
u_{t} - {\rm div} \left( m |u|^{m-1} \nabla u \right) = f, \quad m >1 ,
\label{pme}
\end{equation}
with a source term $f \in L^{q,r} (U_T) \equiv L^r(0,T;L^q(U))$, where
\begin{equation} \label{borderline1}
\frac{1}{r}+\frac{n}{2q} <1,
\end{equation}
which is the standard minimal integrability condition that guarantees the existence of bounded weak solutions and their H\"older regularity. 

We will show that  bounded weak solutions of \eqref{pme} are locally of class $C^{0,\gamma}$ in space, with
$$\gamma =\frac{\alpha}{m}  , \qquad \alpha =  \min\left\{\alpha_{0}^-, \frac{m[(2q - n)r -2q]}{q[mr - (m-1)]}\right\} ,$$
where $0< \alpha_{0} \leq 1$ denotes the optimal H\"older exponent for solutions of \eqref{pme} with $f \equiv 0$. The regularity class is to be interpreted in the following sense: if 
$$\frac{m[(2q - n)r -2q]}{q[mr - (m-1)]} < \alpha_0$$
then solutions are in $C^{0,\gamma}$, with 
$$\gamma = \frac{(2q - n)r -2q}{q[mr - (m-1)]};$$
if, alternatively, 
$$\frac{m[(2q - n)r -2q]}{q[mr - (m-1)]} \geq \alpha_0,$$
then solutions are in $C^{0,\gamma}$, for any $0<\gamma <\frac{\alpha_{0}}{m} $.

We also obtain the $C^{0,\frac{\gamma}{\theta}}$ regularity in time, where 
$$\theta=2 - \left(1-\frac{1}{m}\right)\alpha=  \alpha \left( 1+\frac{1}{m} \right) + \left( 1-\alpha \right) 2 $$ 
is the $\alpha-$interpolation between $1+\frac{1}{m}$ and $2$. It is worth stressing that, as in the case of the $p$-Laplace equation, the integrability in time (respectively, in space) of the source affects the regularity in space (respectively, in time) of the solution. 

We remark that for $m=1$ we obtain
$$\gamma = 1 - \left( \frac{2}{r} + \frac{n}{q} -1 \right) \qquad {\rm and} \qquad \theta = 2,$$
recovering the optimal H\"older regularity for the non-homogeneous heat equation, in accordance with estimates obtained by energy considerations. 

For $n=1$,  it is proven in \cite{AroCaf86} that 
$$\alpha_0=\min \left\{1, \frac{1}{m-1}\right\} $$ 
but this is not the case in higher dimensions as corroborated by the celebrated counter-example in \cite{AroGra93}. The question of the sharp regularity for the homogeneous pme was recently addressed in \cite{GiaSil}, where it is shown that in the case $m\geq 2$  (see also \cite{Iva99} for $1<m<2$) a solution achieves the optimal modulus of continuity $C^{0, \frac{1}{m-1}}$ of the Barenblatt fundamental solution after a precise time lag, which is quantified in the paper. This optimal regularity issue is strongly intertwined with the regularity of the free boundary (cf. \cite{CafVazWol87}).

Observe that
$$\frac{m[(2q - n)r -2q]}{q[mr - (m-1)]} = \frac{2 m \left( 1- \displaystyle \frac{1}{r} - \frac{n}{2q}\right)}{\displaystyle m \left( 1-\frac{1}{r}\right) +  \frac{1}{r} } >0$$
and so indeed $\gamma >0$. Note also that 
$$\frac{m[(2q - n)r -2q]}{q[mr - (m-1)]} >1$$ 
if 
$$\left( 1 + \frac{1}{m}\right) \frac{1}{r}+\frac{n}{q} < 1,$$
and, as $q,r \rightarrow \infty$,
$$\frac{m[(2q - n)r -2q]}{q[mr - (m-1)]} \longrightarrow 2,$$
which means that after a certain integrability threshold it is the optimal regularity exponent of the homogeneous case that prevails, with
$$\alpha = \alpha_0^- \qquad {\rm and}  \qquad \gamma <\frac{\alpha_{0}}{m}<1.$$ 

\section{Weak solutions and approximation}

To fix ideas, we say a  locally bounded  function 
$$u \in C_{\rm loc} \left( 0,T ; L_{\rm loc}^{2}(U) \right), \qquad {\rm with} \quad |u|^{\frac{m+1}{2}} \in L_{\rm loc}^{2} \left( 0,T;W_{\rm loc}^{1,2}(U) \right)$$
is a local weak solution of \eqref{pme} if, for every compact set $K \subset U$ and every subinterval  $[t_{1}, t_{2}] \subset (0, T] $, we have
$$\left. \int_{K} u \varphi  \right|_{t_{1}}^{t_{2}} + \int_{t_{1}}^{t_{2}} \int_{K} \left\{ -u\varphi_{t} + m |u|^{m-1}\nabla u \cdot \nabla \varphi \right\} =\int_{t_{1}}^{t_{2}} \int_{K} f \varphi ,$$
for all  test functions  
$$\varphi \in W_{\rm loc}^{1,2} \left( 0,T;L^{2}(K) \right) \cap  L_{\rm loc}^{2} \left( 0,T;W_{0}^{1,2}(K) \right).$$
It is clear that all integrals in the above definition are convergent (cf. \cite[\S 3.5]{DiBeGiaVes11}), interpreting the gradient term as
$$|u|^{m-1}\nabla u : =  \frac{2}{m+1}\, {\rm sign} (u) \, |u|^{\frac{m-1}{2}} \nabla |u|^{\frac{m+1}{2}}.$$

An equivalent definition of weak solution involving the Steklov average is instrumental in obtaining the following Caccioppoli estimate (cf. \cite[\S 3.6]{DiBeGiaVes11}).

\begin{proposition} 
\label{caccio} 
Let $u$ be a local weak solution of \eqref{pme} and $K \times [t_{1}, t_{2}] \subset U \times (0, T]$. There exists a constant $C$, depending only on $n, m$ and  $K \times [t_{1}, t_{2}] $, such that 
$$\sup_{t_{1}< t < t_{2}} \int_{K}u^{2}\xi^{2} + \int_{t_{1}}^{t_{2}}\int_{K} |u|^{m-1}\vert\nabla u\vert^{2}\xi^{2} $$
$$ \leq  C\int_{t_{1}}^{t_{2}}\int_{K} u^{2}\xi \left|\xi_{t}\right|+\int_{t_{1}}^{t_{2}}\int_{K} |u|^{m+1} \left( \vert\nabla \xi\vert^{2} +\xi^2 \right)+ C\Vert f \Vert^{2}_{L^{q ,r}},$$
for all $\xi \in C_{0}^{\infty}(K \times (t_{1}, t_{2}))$ such that $\xi \in [0, 1].$ 
\end{proposition}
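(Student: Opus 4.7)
The plan is to insert $\varphi=u\xi^{2}$ into the weak formulation. Since $u_{t}$ need not exist pointwise, I will work with the Steklov-averaged form of \eqref{pme}: replace $u$ by its Steklov average $u_{h}$, test against $\varphi = u_{h}\xi^{2}$ (which is admissible because $\xi\in C_{0}^{\infty}(K\times(t_{1},t_{2}))$ and $u_{h}$ is smooth in time), and pass to the limit $h\to 0$ at the end. The three resulting contributions --- parabolic, diffusion and source --- will be estimated separately.

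For the parabolic term I would use $u_{h,t}\,u_{h}=\tfrac{1}{2}\partial_{t}(u_{h}^{2})$ and integrate by parts in time on a subinterval $(t_{1},t]\subset(t_{1},t_{2})$. Because $\xi$ is compactly supported in time, all boundary contributions drop out and one recovers
$$\tfrac{1}{2}\int_{K} u_{h}^{2}\xi^{2}(\cdot,t) - \int_{t_{1}}^{t}\!\!\int_{K} u_{h}^{2}\,\xi\,\xi_{t},$$
which, after letting $h\to 0$ and taking the supremum over $t\in(t_{1},t_{2})$, yields the $\sup_{t}\!\int_{K} u^{2}\xi^{2}$ on the left and the $\int\!\int u^{2}\xi|\xi_{t}|$ on the right.

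For the diffusion term, expand $\nabla(u\xi^{2})=\xi^{2}\nabla u+2u\xi\nabla\xi$ and write
$$m|u|^{m-1}\nabla u\cdot\nabla(u\xi^{2})=m|u|^{m-1}|\nabla u|^{2}\xi^{2}+2m\,|u|^{m-1}u\,\xi\,\nabla u\cdot\nabla\xi.$$
The first piece goes to the left-hand side; the cross term is handled by Young's inequality,
$$\bigl|2m|u|^{m-1}u\,\xi\,\nabla u\cdot\nabla\xi\bigr|\le \tfrac{m}{2}|u|^{m-1}|\nabla u|^{2}\xi^{2}+2m|u|^{m+1}|\nabla\xi|^{2},$$
the first half being absorbed on the left and the second producing the $|u|^{m+1}|\nabla\xi|^{2}$ contribution. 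Interpreting $|u|^{m-1}\nabla u$ via $\nabla|u|^{(m+1)/2}$, as in the definition recalled in the text, is what makes the calculation legitimate across the degeneracy set $\{u=0\}$.

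Finally, for the source term $\int\!\int fu\xi^{2}$ I would apply H\"older's inequality in $L^{q,r}$ paired with its dual, giving a bound by $\|f\|_{L^{q,r}}\|u\xi^{2}\|_{L^{q',r'}}$, and then use the local boundedness of $u$ to dominate $\|u\xi^{2}\|_{L^{q',r'}}$ by a constant depending on $K\times[t_{1},t_{2}]$ and $\|u\|_{\infty}$; a final Young's inequality turns the product into $C\|f\|_{L^{q,r}}^{2}$ plus a harmless constant that may be folded into $C$. The main obstacle is this last bookkeeping step: calibrating the H\"older/Young split so that every non-source remainder either matches the $|u|^{m+1}\xi^{2}$ term on the right or can be absorbed into one of the two good energy quantities on the left, while the clean $C\|f\|_{L^{q,r}}^{2}$ is preserved. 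Once this is arranged, combining the three estimates, taking the supremum in $t$, and sending $h\to 0$ delivers the stated inequality; an extension to general $\xi$ follows by density.
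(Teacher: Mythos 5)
Your proposal is essentially the paper's own argument. The paper's ``proof'' is a one-sentence sketch (test with $\varphi = u_h\xi^2$, integrate in time, pass $h\to 0$, apply Young's inequality), and you have written out precisely that computation: the Steklov handling of the parabolic term, the expansion $\nabla(u\xi^2)=\xi^2\nabla u + 2u\xi\nabla\xi$, and the Young split on the cross term are all correct, and the interpretation of $|u|^{m-1}\nabla u$ through $\nabla|u|^{(m+1)/2}$ is the right way to make this legitimate across $\{u=0\}$. The one place where your bookkeeping is slightly loose is the source term: after H\"older you bound $\|u\xi^2\|_{L^{q',r'}}$ by $\|u\|_\infty$ times the measure of the domain, so the constant you produce depends on $\|u\|_\infty$, whereas the proposition lists only $n$, $m$ and $K\times[t_1,t_2]$. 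To avoid this, the standard refinement is to dominate $\|u\xi^2\|_{L^{q',r'}}$ by $C(K,t_2-t_1)\bigl(\sup_t\int_K u^2\xi^2\bigr)^{1/2}$ via spatial H\"older and then absorb the small resulting multiple of $\sup_t\int_K u^2\xi^2$ into the left-hand side before applying Young; this leaves exactly $C\|f\|_{L^{q,r}}^2$ on the right. This is a fine point that the paper also elides, and in every application in the paper one has $\|u\|_\infty\le 1$, so your version is operationally adequate.
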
 

\noindent The proof is standard and follows from testing the equation with $\varphi = u_h \xi^2$, where $u_h$ is the Steklov average of $u$, and performing the usual combination of integrating in time, passing to the limit in $h \rightarrow 0$ and applying Young's inequality. 

\medskip

We start our fine regularity analysis by fixing the intrinsic geometric setting for our problem. Given $0 < \alpha \leq 1$, let
\begin{equation}
\label{theta}
\theta : = 2 - \left(1-\frac{1}{m}\right)\alpha,
\end{equation}
which clearly satisfies the bounds
$$1+ \frac{1}{m} \leq \theta < 2 .$$ 
For such $\theta$, define the intrinsic $\theta $-parabolic cylinder as
$$G_{\rho} := \left( -\rho^\theta, 0 \right) \times B_{\rho}(0), \quad \rho > 0.$$

\medskip

We next use the available compactness to derive a mechanism linking solutions of the inhomogeneous pme and solutions of the homogeneous equation. This result is to be compared with a similar statement for the $p$-Laplace equation in \cite{TeiUrb14} (see also \cite{DuzMin05, BogDuzMin13}). 

\begin{lemma} \label{aprox}
Given $\delta >0$, there exists  $0 < \epsilon \ll 1$ such that if $\Vert f \Vert_{L^{q ,r}(G_{1})} \leq \epsilon$ and $u$ is a local weak solution of $(\ref{pme})$ in $G_{1}$, with $\Vert u \Vert_{\infty, G_{1}} \leq 1$, then there exists $\phi$ such that  
\begin{equation}
\phi_{t} - {\rm div} \left( m |\phi|^{m-1} \nabla \phi \right) = 0 \quad {\rm in}\ G_{1/2}
\label{homog}
\end{equation}
and
$$\Vert u -\phi \Vert_{\infty, G_{1/2}} \leq \delta .$$
\end{lemma}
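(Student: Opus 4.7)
The plan is a compactness-by-contradiction argument in the spirit of \cite{TeiUrb14, DuzMin05}. Suppose the lemma fails: there exist $\delta_0 > 0$ and sequences $\{f_k\} \subset L^{q,r}(G_1)$ and $\{u_k\}$ with $u_k$ a local weak solution of $(u_k)_t - \div(m|u_k|^{m-1}\nabla u_k) = f_k$ in $G_1$, satisfying $\|u_k\|_{\infty, G_1} \leq 1$ and $\|f_k\|_{L^{q,r}(G_1)} \leq 1/k$, but
$$\|u_k - \phi\|_{\infty, G_{1/2}} > \delta_0$$
for every $\phi$ solving the homogeneous equation \eqref{homog} in $G_{1/2}$. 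The goal is to extract, along a subsequence, a uniform limit $u_\infty$ that itself solves \eqref{homog}, making $\phi := u_\infty$ a contradiction.

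First I would set up the compactness. Applying Proposition \ref{caccio} on a slightly larger cylinder (say $G_{3/4}$) with the uniform data $\|u_k\|_\infty \leq 1$, $\|f_k\|_{L^{q,r}} \leq 1$ yields $k$-independent bounds on $u_k$ in $L^\infty_t L^2_x$ and, via the identity
$$|\nabla |u_k|^{\frac{m+1}{2}}|^{2} = \frac{(m+1)^2}{4}\,|u_k|^{m-1}|\nabla u_k|^{2},$$
on $\nabla |u_k|^{(m+1)/2}$ in $L^{2}(G_{3/4})$. The decisive ingredient is the interior H\"older regularity for bounded weak solutions of the inhomogeneous pme with source in $L^{q,r}$ satisfying \eqref{borderline1}, as developed in \cite{DiBeGiaVes11, Urb08}: the modulus of continuity depends only on $n$, $m$, $\|u_k\|_\infty$ and $\|f_k\|_{L^{q,r}}$, so the family $\{u_k\}$ is equicontinuous on $\overline{G_{1/2}}$. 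By Arzel\`a-Ascoli I extract a subsequence (not relabeled) and $u_\infty \in C(\overline{G_{1/2}})$ with $\|u_\infty\|_\infty \leq 1$ and $u_k \to u_\infty$ uniformly on $\overline{G_{1/2}}$.

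Next I would pass to the limit in the weak formulation to show that $u_\infty$ solves \eqref{homog}. Uniform convergence of $u_k$ gives uniform convergence of $|u_k|^{(m+1)/2}$ and, since $m>1$, of the continuous nonlinearity $\mathrm{sign}(u_k)|u_k|^{(m-1)/2}$; combined with the $L^2$-bound on the gradients, a standard extraction gives $\nabla |u_k|^{(m+1)/2} \rightharpoonup \nabla |u_\infty|^{(m+1)/2}$ weakly in $L^2(G_{1/2})$. Writing the flux as in Section 2,
$$m|u_k|^{m-1}\nabla u_k = \frac{2m}{m+1}\,\mathrm{sign}(u_k)\,|u_k|^{\frac{m-1}{2}}\,\nabla |u_k|^{\frac{m+1}{2}},$$
the product of a uniformly convergent factor and a weakly $L^2$-convergent factor passes to the limit when tested against $\nabla \varphi \in L^2$. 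The time derivative $-u_k\varphi_t$ and the traces $\int_K u_k\varphi|_{t_1}^{t_2}$ converge by uniform convergence, while $\int\!\!\int f_k\varphi \to 0$ by H\"older's inequality and $\|f_k\|_{L^{q,r}}\to 0$. Hence $u_\infty$ is a weak solution of \eqref{homog} in $G_{1/2}$, contradicting the standing assumption for $k$ sufficiently large.

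The main obstacle is the equicontinuity step. Because the pme degenerates where $u=0$, a purely energy-based argument (Aubin–Lions applied to $|u_k|^{(m+1)/2}$) only delivers strong convergence in $L^p$, which is not sufficient to close the gap to the $L^\infty$ conclusion demanded by the lemma. One genuinely has to invoke the full intrinsic-scaling H\"older theory for the inhomogeneous pme, in which the modulus of continuity depends only on $\|u_k\|_\infty$ and $\|f_k\|_{L^{q,r}}$ and therefore transfers uniformly to the whole sequence; once this is available, the remaining passage-to-the-limit is routine.
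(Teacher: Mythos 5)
Your proposal is correct and follows essentially the same route as the paper's proof: contradiction via sequences, Caccioppoli energy bounds, equicontinuity from the interior H\"older theory in \cite{DiBeGiaVes11}, Arzel\`a--Ascoli for uniform convergence, weak $L^2$ convergence of $\nabla |u_k|^{(m+1)/2}$, and passage to the limit in the weak formulation. Your write-up is in fact slightly more explicit than the paper's in decomposing the flux into a uniformly convergent scalar factor times a weakly convergent gradient factor, and you correctly flag that equicontinuity is the non-routine ingredient rather than a naive Aubin--Lions argument.
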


\begin{proof}
Suppose, for the sake of contradiction, that, for some $\delta_{0} > 0$, there exist sequences $(u^{j})_{j}$ and $(f^{j})_{j}$, with 
$$u^{j} \in C_{\rm loc} \left( -1,0 ; L_{\rm loc}^{2}(B_1) \right), \quad \left| u^j \right|^{\frac{m+1}{2}} \in L_{\rm loc}^{2} \left( -1,0;W_{\rm loc}^{1,2}(B_1) \right)$$
and $f^{j} \in L^{q, r}(G_{1})$, such that
\begin{eqnarray}
\label{contrad. 1}
u_{t}^{j} - {\rm div} \left( m \left|u^{j}\right|^{m-1}\nabla u^{j} \right) = f^{j} \quad {\rm in} \ G_{1}\\
\label{contrad. 2} 
\Vert u^{j} \Vert_{\infty, G_{1}} \leq 1,\\
\label{contrad. 3} 
\Vert f^{j} \Vert_{L^{q ,r}(G_{1})} \  \leq 1/j,
\end{eqnarray}
but still, for any $j$ and any solution $\phi$ of the homogeneous equation in $G_{1/2}$,
\begin{eqnarray}
\Vert u^{j} -\phi \Vert_{\infty, G_{1/2}} > \delta_{0}.
\label{pqop}
\end{eqnarray}

Consider a cutoff function  $\xi \in  C_{0}^{\infty}(G_{1})$ such that $\xi \in [0,1], \xi \equiv 1$ in $G_{1/2}$ and $\xi \equiv 0$ near $\partial_{p}G_{1}$. 
Thus, since $u^{j}$ is a solution of \eqref{pme}, we can apply the Caccioppoli estimate of Proposition \ref{caccio} to get
$$\sup_{-1< t < 0} \int_{B_{1}}(u^j)^{2}\xi^{2} + \int_{-1}^{0}\int_{B_{1}} \left| u^{j}\right|^{m-1}\vert\nabla u^{j}\vert^{2}\xi^{2}$$
$$\leq C \int_{-1}^{0}\int_{B_{1}}(u^{j})^{2}\xi\vert \xi_{t}\vert + \int_{-1}^{0}\int_{B_{1}}\left| u^{j}\right|^{m+1} \left( \vert\nabla \xi\vert^{2}  + |\xi|^2 \right) + C \Vert f^{j} \Vert^{2}_{L^{q ,r}} \leq \tilde{c},$$
using \eqref{contrad. 2} and \eqref{contrad. 3}. Let us now define $v^{j} := \left| u^{j}\right|^{\frac{m+1}{2}}$. Observing that
$$ \vert \nabla v^{j} \vert^{2} = \left( \frac{m+1}{2} \right)^{2} \left| u^{j}\right|^{m-1} \vert \nabla u^{j} \vert^{2},$$
we obtain, 
$$\Vert \nabla v^{j} \Vert_{2, G_{1/2}}^{2} \leq \int_{-1}^{0}\int_{B_{1}}\vert\nabla v^{j} \vert^{2} \xi^{2} dxdt \leq \left( \frac{m+1}{2} \right)^{2}\tilde{c}$$
and then, for a subsequence, 
$$\nabla v^{j} \ \rightharpoonup \ \psi$$
weakly in $L^2 (G_{1/2})$. 

Moreover, owing to \cite[Thm. 16.1]{DiBeGiaVes11}, the equibounded sequence $(u^{j})_{j}$ is also equicontinuous and, by Arzel\`a--Ascoli theorem, 
$$u^{j} \ \longrightarrow \ \phi $$
uniformly in $G_{1/2}$, for yet another (relabelled) subsequence. Since we also have the pointwise convergence
$$ v^{j} := \left|u^{j}\right|^{\frac{m+1}{2}}\ \longrightarrow \ |\phi|^{\frac{m+1}{2}} =: v,$$
we can identify $\psi = \nabla v$.

Passing to the limit in \eqref{contrad. 1}, we find that $\phi$ solves \eqref{homog} which contradicts \eqref{pqop} for $j \gg 1$.
\end{proof}

\section{Sharp regularity via geometric iteration}
 
We now set up a geometric iteration, exploring the intrinsic scaling of the pme that will be crucial in obtaining the sharp H\"older exponent. The next result is the first step in this iteration. Let $\alpha_{0}$ denote the sharp H\"older continuity exponent for solutions of  \eqref{pme} in the homogeneous case and
$$\gamma = \frac{\alpha}{m},$$
with
\begin{equation}
0 < \alpha = \min\left\{\alpha_{0}^-, \frac{m[(2q - n)r -2q]}{q[mr - (m-1)]}\right\} < \alpha_{0} \leq \min\left\{1,\frac{1}{m-1}\right\}\leq1.
\label{alfa}
\end{equation}

\begin{lemma}\label{Lemma 3.3}
There exists  $\epsilon >0, $ and $0 < \lambda \ll 1/2$, depending only on $m,n$ and $\alpha$, such that if $\Vert f \Vert_{L^{q ,r}(G_{1})} \leq \epsilon$ and $u$ is a local weak solution of \eqref{pme} in $G_{1}$, with $\Vert u \Vert_{\infty, G_{1}} \leq 1$,  then 
$$\Vert u \Vert_{\infty, G_{\lambda}} \leq \lambda^{\gamma}$$
provided 
$$|u(0,0)|\leq \frac{1}{4} \lambda^{\gamma}.$$
\end{lemma}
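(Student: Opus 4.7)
I would combine the approximation Lemma~\ref{aprox} with the interior H\"older regularity of solutions to the homogeneous porous medium equation, exploiting the strict inequality $\gamma=\alpha/m<\alpha_0$ guaranteed by \eqref{alfa} together with the intrinsic scaling encoded by $\theta$.

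First I fix an auxiliary exponent $\tilde\alpha\in(\alpha,\alpha_0)$, which is available thanks to \eqref{alfa}. By the classical interior regularity theory for the homogeneous porous medium equation (as developed in \cite{DiBe93, DiBeGiaVes11}), any bounded weak solution $\phi$ of \eqref{homog} in $G_{1/2}$ with $\|\phi\|_{\infty}\le 2$ obeys, on the smaller cylinder $G_{1/4}$,
\[
|\phi(x,t)-\phi(0,0)|\le L\bigl(|x|^{\tilde\alpha}+|t|^{\tilde\alpha/2}\bigr),
\]
for a constant $L=L(m,n,\tilde\alpha)$. For a $\delta>0$ to be chosen, Lemma~\ref{aprox} then supplies $\epsilon=\epsilon(\delta)$ and a homogeneous $\phi$ with $\|u-\phi\|_{\infty,G_{1/2}}\le\delta\le 1$ (so $\|\phi\|_\infty\le 2$). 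For $(x,t)\in G_\lambda$ with $\lambda\le 1/4$ one has $|x|\le\lambda$ and $|t|\le\lambda^\theta$, and the triangle inequality gives
\[
|u(x,t)|\le 2\delta+|u(0,0)|+L\bigl(\lambda^{\tilde\alpha}+\lambda^{\theta\tilde\alpha/2}\bigr)\le 2\delta+\tfrac14\lambda^{\gamma}+2L\lambda^{\min\{\tilde\alpha,\,\theta\tilde\alpha/2\}}.
\]

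The crucial algebraic point is that both exponents in the minimum strictly exceed $\gamma=\alpha/m$. Since $m>1$, $\tilde\alpha>\alpha>\alpha/m$ is immediate. For the time exponent, using $\theta\ge 1+\tfrac{1}{m}=(m+1)/m$ and $\tilde\alpha>\alpha$,
\[
\frac{\theta\tilde\alpha}{2}\ge\frac{(m+1)\tilde\alpha}{2m}>\frac{(m+1)\alpha}{2m}>\frac{\alpha}{m},
\]
the final step from $m+1>2$. This is exactly the role the intrinsic parameter $\theta$ from \eqref{theta} is designed to play.

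Having secured the two exponent inequalities, I would choose $\lambda$ small enough that $2L\lambda^{\min\{\tilde\alpha,\theta\tilde\alpha/2\}}\le\tfrac12\lambda^\gamma$, and then $\delta$ (and hence $\epsilon$, through Lemma~\ref{aprox}) small enough that $2\delta\le\tfrac14\lambda^\gamma$, so the three contributions sum to $\lambda^\gamma$. The only mildly delicate aspect is respecting the order of parameter selection, first $\tilde\alpha$, then $\lambda$, and finally $\delta$ (which fixes $\epsilon$); modulo this, the argument is a clean application of the approximation-by-regularity principle, with the intrinsic cylinder $G_\lambda$ precisely calibrated so that the homogeneous H\"older exponent beats the target exponent $\gamma$ in both space and time.
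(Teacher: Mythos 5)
Your proposal is correct and follows essentially the same route as the paper: apply Lemma~\ref{aprox}, transfer the interior H\"older regularity of the homogeneous solution $\phi$ to $u$ via the triangle inequality, and tune $\lambda$ and then $\delta$ (hence $\epsilon$) so that the three contributions are absorbed into $\lambda^\gamma$. The only cosmetic difference is that you introduce an intermediate exponent $\tilde\alpha\in(\alpha,\alpha_0)$ and keep the space and time exponents separate before comparing each with $\gamma$, whereas the paper works directly with $\alpha_0$ (consistent with the $\alpha_0^-$ convention in \eqref{alfa}) and bundles both terms into a single bound $C\lambda^{\alpha_0/m}$, then chooses $\lambda=(4C)^{-m/(\alpha_0-\alpha)}$ explicitly; the underlying exponent inequalities ($\theta\ge 1+1/m>2/m$) are the same.
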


\begin{proof}
Take $0<\delta <1$, to be chosen later, and apply Lemma \ref{aprox} to obtain $0 < \epsilon \ll 1$ and a solution $\phi$ of \eqref{homog} in $G_{1/2}$  such that
$$\Vert u -\phi \Vert_{\infty, G_{1/2}} \leq \delta.$$
Since $\phi$ solves \eqref{homog}, it follows from the available regularity theory (cf.  \cite[Thm. 16.1]{DiBeGiaVes11}) that $\phi$ is locally $C_{x}^{\alpha_{0}} \cap C_{t}^{\alpha_{0}/2} $,  for $ 0 < \alpha_{0} < 1$. Thus we obtain 
\begin{eqnarray*}
\underset{(x, t) \in G_{\lambda}}{\sup} \vert \phi(x, t) - \phi(0, 0) \vert \leq C\lambda^{\frac{\alpha_{0}}{m}},
\end{eqnarray*}
for $\lambda \ll 1,$ to be chosen soon, and $C > 1$ universal. In fact, for $(x, t) \in G_{\lambda}$,
\begin{eqnarray*}
\vert \phi(x, t) - \phi(0, 0) \vert &\leq &  \vert \phi(x, t) - \phi(0, t) \vert + \vert \phi(0, t) - \phi(0, 0) \vert \\
&\leq & c_{1}\vert x - 0 \vert^{\alpha_{0}} +  c_{2}\vert t - 0 \vert^{\alpha_{0}/2}\\
&\leq & c_{1}\lambda^{\alpha_{0}} + c_{2}\lambda^{\frac{\theta}{2}\alpha_{0}}\\
&\leq & C\lambda^{\frac{\alpha_{0}}{m}}
\end{eqnarray*}
since $\theta \geq 1+ \frac{1}{m} >\frac{2}{m}$. We can therefore estimate 
\begin{eqnarray}
\sup_{G_\lambda} |u| &\leq &  \sup_{G_{1/2}}  |u-\phi | + \sup_{G_\lambda} |\phi-\phi(0,0)| + |\phi(0,0)-u(0,0)| + |u(0,0)| \nonumber \\
&\leq & 2 \delta + C\lambda^{\frac{\alpha_{0}}{m}}+ \frac{1}{4} \lambda^{\gamma}. \label{solskajer}
\end{eqnarray}
Note that we will choose $\lambda \ll 1/2$ and thus 
\begin{eqnarray*}
G_{\lambda} := (-\lambda^{\theta}, 0) \times B_{\lambda} \subset (-(1/2)^{\theta}, 0) \times B_{1/2} = G_{1/2}.
\end{eqnarray*}

We finally fix the constants, choosing 
$$\lambda = {\left(\frac{1}{4C}\right)^{\frac{m}{\alpha_{0} - \alpha}}} \qquad {\rm and} \qquad \delta= \frac{1}{4} \lambda^{\gamma} ,$$
and fixing also $\epsilon >0$, through Lemma \ref{aprox}. The result follows from estimate $(\ref{solskajer})$ with the indicated choices. 
\end{proof}

We now iterate the previous result in the appropriate geometric setting. 

\begin{theorem}
There exists  $\epsilon >0, $ and $0 < \lambda \ll 1/2$, depending only on $m,n$ and $\alpha$, such that if $\Vert f \Vert_{L^{q ,r}(G_{1})} \leq \epsilon$ and $u$ is a local weak solution of \eqref{pme} in $G_{1}$, with $\Vert u \Vert_{\infty, G_{1}} \leq 1,$  then 
\begin{equation}
\Vert u  \Vert_{\infty, G_{\lambda^{k}}} \leq  (\lambda^k)^{\gamma}
\label{xuxu}
\end{equation}
provided
$$|u(0,0)|\leq \frac{1}{4} \left( \lambda^k \right)^{\gamma}.$$
\label{pois}
\end{theorem}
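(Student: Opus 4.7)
The plan is to prove this by induction on $k$. The base case $k=1$ is precisely Lemma \ref{Lemma 3.3}. For the inductive step, I would exploit the intrinsic $\theta$-parabolic scaling of the pme to reduce the $(k+1)$-th statement to a single fresh application of Lemma \ref{Lemma 3.3} on an appropriately rescaled function.

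Assuming \eqref{xuxu} at level $k$, take a solution $u$ satisfying the hypotheses of the theorem at level $k+1$. Since $\lambda < 1$ and $\gamma > 0$, the bound $|u(0,0)| \leq \tfrac{1}{4}\lambda^{(k+1)\gamma}$ implies $|u(0,0)| \leq \tfrac{1}{4}\lambda^{k\gamma}$, so the inductive hypothesis yields $\|u\|_{\infty, G_{\lambda^k}} \leq \lambda^{k\gamma}$. I would then introduce
\[
v(y,s) := \lambda^{-k\gamma}\, u\!\left(\lambda^k y,\, \lambda^{k\theta} s\right),
\]
defined on $G_1$ because the change of variables sends $G_1$ onto $G_{\lambda^k}$ (here one uses that $G_\rho = (-\rho^\theta,0) \times B_\rho$). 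A direct computation, using $\gamma = \alpha/m$ and $\theta = 2 - (1-\tfrac{1}{m})\alpha$, shows that $v$ is a local weak solution in $G_1$ of the inhomogeneous pme with source
\[
\tilde f(y,s) = \lambda^{k(2-\alpha)} f\!\left(\lambda^k y,\, \lambda^{k\theta} s\right).
\]
The choice of exponents is exactly the one that makes the $v_s$ and $\operatorname{div}(m|v|^{m-1}\nabla v)$ terms match, which is why the intrinsic scaling $\theta$ was built in the first place.

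The hypotheses $\|v\|_{\infty, G_1} \leq 1$ and $|v(0,0)| \leq \tfrac{1}{4}\lambda^{\gamma}$ are immediate from the inductive hypothesis and from the assumed bound on $u(0,0)$. The key step, and the heart of the iteration, is to verify that $\|\tilde f\|_{L^{q,r}(G_1)} \leq \epsilon$. A straightforward change of variables gives
\[
\|\tilde f\|_{L^{q,r}(G_1)} = \lambda^{k\left[(2-\alpha)\, -\, n/q\, -\, \theta/r\right]} \|f\|_{L^{q,r}(G_{\lambda^k})},
\]
so the issue is to verify that the bracketed exponent is nonnegative. Substituting $\theta = 2 - (1-\tfrac{1}{m})\alpha$ and rearranging, this inequality is equivalent to
\[
\alpha \;\leq\; \frac{m\bigl[(2q-n)r - 2q\bigr]}{q\bigl[mr - (m-1)\bigr]},
\]
which is built into the definition \eqref{alfa} of $\alpha$. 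Consequently $\|\tilde f\|_{L^{q,r}(G_1)} \leq \|f\|_{L^{q,r}(G_1)} \leq \epsilon$, and Lemma \ref{Lemma 3.3} applies to $v$ to give $\|v\|_{\infty, G_\lambda} \leq \lambda^{\gamma}$. Unscaling (multiplying by $\lambda^{k\gamma}$ and observing that $G_\lambda$ for $v$ corresponds to $G_{\lambda^{k+1}}$ for $u$) then delivers \eqref{xuxu} at level $k+1$.

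The only subtlety is bookkeeping: keeping the three scaling factors $\lambda^k$ (space), $\lambda^{k\theta}$ (time) and $\lambda^{k\gamma}$ (amplitude) consistent across all terms, and recognising that the borderline condition on the rescaled source is not a coincidence but rather the algebraic reason behind the precise sharp exponent stated in the main result. Once this alignment is observed, the inductive step is a direct rescale-and-reapply argument with no further analytical input beyond Lemma \ref{Lemma 3.3}.
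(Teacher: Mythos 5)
Your proposal is correct and follows essentially the same route as the paper: induction with base case Lemma \ref{Lemma 3.3}, the intrinsic rescaling $v(y,s)=\lambda^{-k\gamma}u(\lambda^{k}y,\lambda^{k\theta}s)$, verification that the rescaled source has non-increasing $L^{q,r}$ norm because the exponent $k\bigl[(2-\alpha)-n/q-\theta/r\bigr]$ is nonnegative under \eqref{alfa}, and then a single reapplication of Lemma \ref{Lemma 3.3}. You even make explicit a small step the paper leaves implicit, namely that the level-$(k+1)$ pointwise bound on $|u(0,0)|$ implies the level-$k$ one so the induction hypothesis can be invoked.
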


\begin{proof}
The proof is by induction on $ k \in \mathbb{N}$. If $k = 1$, \eqref{xuxu} holds due to Lemma \ref{Lemma 3.3}. Now suppose the conclusion holds for $k$ and let's show it also holds for $k+1$. Consider the function $v:G_{1} \rightarrow \R$  defined by
$$v(x,t) = \frac{u(\lambda^{k}x, \lambda^{k\theta}t)}{\lambda^{\gamma k}}.$$
We have 
$$v_{t}(x,t) = \lambda^{k\theta - \gamma k}u_{t}(\lambda^{k}x, \lambda^{k\theta}t),$$

$$\nabla v(x,t) = \lambda^{k - \gamma k}\nabla u(\lambda^{k}x, \lambda^{k\theta}t)$$
and
$${\rm div} \left( m \left| v(x,t) \right|^{m-1}\nabla v(x,t) \right) $$
$$= \lambda^{k(2-\alpha)} {\rm div} \left( m \left| u(\lambda^{k}x, \lambda^{k\theta}t) \right|^{m-1}\nabla u(\lambda^{k}x, \lambda^{k\theta}t) \right).$$
Recalling $(\ref{theta})$, we conclude, since $u$ is a local weak solution of \eqref{pme} in $G_{1}$, that
$$v_{t}- {\rm div} \left( m |v|^{m-1}\nabla v \right)= \lambda^{k(2-\alpha)}f(\lambda^{k}x, \lambda^{k\theta}t)= \tilde{f}(x,t).$$
We now compute 
\begin{eqnarray*}
\Vert \tilde{f} \Vert_{L^{q,r}(G_{1})}^{r} &=& \int_{-1}^{0} \left(\int_{B_{1}} \left\vert \tilde{f}(x,t) \right\vert^{q} dx \right)^{r/q}dt\\
&=& \int_{-1}^{0}\left(  \int_{B_{1}} \lambda^{k(2 - \alpha )q}     \left\vert  f(\lambda^{k}x, \lambda^{k\theta}t) \right\vert^{q} dx \right)^{r/q}dt\\
&=& \int_{-1}^{0}\left(  \int_{B_{\lambda^k}} \lambda^{k(2 - \alpha)q - kn} \left\vert  f(x, \lambda^{k\theta}t) \right\vert^{q} dx \right)^{r/q}dt\\
&=&  \lambda^{[k(2 - \alpha )q - kn]\frac{r}{q} } \int_{-1}^{0}\left( \int_{B_{\lambda^{k}}}  \left\vert f(x, \lambda^{k\theta}t) \right\vert^{q} dx \right)^{r/q}dt\\
&=&  \lambda^{[k(2 - \alpha )q - kn]\frac{r}{q} - k\theta}  \int_{-\lambda^{k\theta}}^{0}\left(  \int_{B_{\lambda^{k}}}  \left\vert f(x, t) \right\vert^{q} dx \right)^{r/q}dt.\\
\end{eqnarray*}

Because of the crucial and optimal choice of $\alpha$ in \eqref{alfa}, we have
$$\left[ k(2 - \alpha)q - kn \right]\frac{r}{q} - k\theta \geq  0$$
and thus
\begin{eqnarray*}
\Vert \tilde{f} \Vert_{L^{q,r}(G_{1})} \leq \Vert f \Vert_{L^{q,r}( (-\lambda^{\theta k}, 0) \times B_{\lambda^{k}})} \leq \Vert f \Vert_{L^{q,r}(G_{1})} \leq \epsilon,
\end{eqnarray*}
which entitles $v$ to Lemma \ref{Lemma 3.3}. Note that $\Vert v \Vert_{\infty,G_{1}} \leq 1$, due to the induction hypothesis, and 
$$\left| v(0,0) \right| = \left| \frac{u(0,0)}{ \left( \lambda^k \right)^{\gamma}} \right| \leq \left| \frac{\frac{1}{4} \left( \lambda^{k+1} \right)^{\gamma}}{ \left( \lambda^k \right)^{\gamma}} \right| \leq \frac{1}{4} \lambda^{\gamma}.$$ 
It then follows that 
$$\Vert v \Vert_{\infty, G_{\lambda}} \leq \lambda^{\gamma},$$
which is the same as
$$\Vert u \Vert_{\infty, G_{\lambda^{k+1}}} \leq  \lambda^{\gamma(k+1)}.$$
The induction is complete. 
\end{proof}

We next show the smallness regime required in the previous theorem is not restrictive and generalize it to cover the case of any small radius.

\begin{theorem}
If $u$ is a local weak solution of \eqref{pme} in $G_{1}$ then, for every $0<r<\lambda$, we have
$$\|u  \|_{\infty, G_{r}} \leq  C\, r^{\gamma}$$
provided
$$|u(0,0)|\leq \frac{1}{4} r^{\gamma}.$$
\label{sim}
\end{theorem}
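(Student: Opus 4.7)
The plan is to reduce the theorem to Theorem~\ref{pois} by a PME-preserving dilation that normalizes both the amplitude and the source size. Because any rescaling preserving the PME structure couples the amplitude with time --- the ansatz $v(x,t)=K^{-1}u(Ax,Bt)$ preserves the equation only when $B=K^{1-m}A^{2}$ --- I will use a pure time dilation and set
$$v(x,t) := \kappa^{-1}\, u\!\left(x,\kappa^{1-m}\, t\right),$$
where $\kappa \geq 1$ is a constant to be fixed. A direct computation shows that $v$ is a local weak solution of the PME on $G_{1}$ with source $\tilde{f}(x,t) = \kappa^{-m}\, f(x,\kappa^{1-m}\, t)$, and a change of variables in the time integral yields
$$\Vert \tilde{f}\Vert_{L^{q,r}(G_{1})} \ \leq\ \kappa^{\frac{m-1}{r}-m}\, \Vert f\Vert_{L^{q,r}(G_{1})}.$$
Since the exponent $\tfrac{m-1}{r}-m$ is strictly negative, the choice
$$\kappa := \max\!\left\{1,\ \Vert u\Vert_{\infty, G_{1}},\ \left(\Vert f\Vert_{L^{q,r}(G_{1})}/\epsilon\right)^{\frac{r}{mr-m+1}}\right\},$$
with $\epsilon$ as in Theorem~\ref{pois}, simultaneously ensures $\Vert v\Vert_{\infty, G_{1}}\leq 1$ and $\Vert \tilde{f}\Vert_{L^{q,r}(G_{1})}\leq \epsilon$.

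With $v$ thus normalized, Theorem~\ref{pois} yields $\Vert v\Vert_{\infty, G_{\lambda^{k}}}\leq \lambda^{k\gamma}$ for every $k\in\N$ satisfying $|v(0,0)|\leq \tfrac{1}{4}\lambda^{k\gamma}$. To translate this into an estimate on $u$, note that the time dilation $(y,s)\mapsto (y,\kappa^{1-m}s)$ carries the intrinsic cylinder $G_{\lambda^{k}}$ for $v$ onto the time-contracted cylinder $B_{\lambda^{k}}\times(-\kappa^{1-m}\lambda^{k\theta},0)$ for $u$. To secure the inclusion $G_{r}\subset B_{\lambda^{k}}\times(-\kappa^{1-m}\lambda^{k\theta},0)$, I pick $k\in\N$ so that
$$\lambda^{k+1}\ <\ r\, \kappa^{(m-1)/\theta}\ \leq\ \lambda^{k};$$
the right inequality controls the time direction and, since $\kappa\geq 1$, automatically gives $\lambda^{k}\geq r$, controlling the space direction. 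The hypothesis $|u(0,0)|\leq \tfrac{1}{4}r^{\gamma}$ then yields $|v(0,0)| \leq \tfrac{1}{4}r^{\gamma}/\kappa \leq \tfrac{1}{4}\lambda^{k\gamma}$, activating Theorem~\ref{pois}. Combining all the pieces,
$$\Vert u\Vert_{\infty, G_{r}}\ \leq\ \kappa\, \Vert v\Vert_{\infty, G_{\lambda^{k}}}\ \leq\ \kappa\, \lambda^{k\gamma}\ \leq\ \kappa\!\left(\frac{r\, \kappa^{(m-1)/\theta}}{\lambda}\right)^{\!\gamma} =\ \kappa^{2/\theta}\, \lambda^{-\gamma}\, r^{\gamma},$$
where the last equality uses the identity $1+(m-1)\gamma/\theta = 2/\theta$, a direct consequence of $(m-1)\gamma = 2-\theta$.

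The main obstacle is that, unlike the $p$-Laplace setting, the PME does not admit any pure amplitude rescaling that preserves its structure; the amplitude normalization necessarily forces the non-intrinsic time dilation by $\kappa^{1-m}$, producing a mismatch with the intrinsic $\theta$-cylinders used in Theorem~\ref{pois}. This mismatch is precisely what introduces the correction factor $\kappa^{(m-1)/\theta}$ into the choice of the index $k$ and gives the final universal constant $C=\kappa^{2/\theta}\lambda^{-\gamma}$. A minor edge case concerns those $r$ so close to $\lambda$ that no admissible $k\geq 1$ satisfies the displayed inequality; in that narrow range the conclusion follows trivially from the local bound $\Vert u\Vert_{\infty,G_{r}}\leq \kappa$, absorbing a further constant into $C$.
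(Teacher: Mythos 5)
Your proof is correct and is a more careful implementation of the same overall strategy as the paper: normalize $u$ by a PME-preserving dilation to enter the smallness regime of Theorem~\ref{pois}, then extract the estimate by choosing a dyadic index $k$. The specific rescaling differs, however. The paper uses $v(x,t)=\rho\,u(\rho^{a}x,\rho^{(m-1)+2a}t)$ with a spatial stretch parameter $a>0$ (and then, somewhat tersely, applies Theorem~\ref{pois} to $u$ directly with $\lambda^{k+1}<r\leq\lambda^{k}$ and writes $C=\lambda^{-\gamma}$, leaving the effect of the rescaling on the constant implicit). You instead take $a=0$, i.e.\ a pure amplitude--time dilation $v(x,t)=\kappa^{-1}u(x,\kappa^{1-m}t)$, which keeps the spatial variable intact and makes the translation back to $u$ transparent: the only distortion is the time contraction by $\kappa^{1-m}$, which you compensate by replacing the naive index condition with $\lambda^{k+1}<r\kappa^{(m-1)/\theta}\leq\lambda^{k}$. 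The resulting constant $C=\kappa^{2/\theta}\lambda^{-\gamma}$ is explicit, and the identity $1+(m-1)\gamma/\theta=2/\theta$ (equivalent to $\theta=2-(m-1)\gamma$, which is exactly \eqref{theta}) is the right algebraic observation to collapse the exponents. You also correctly dispose of the boundary case where no admissible $k\geq1$ exists by noting that there $\|u\|_{\infty,G_{r}}\leq\kappa\leq\kappa^{2/\theta}\lambda^{-\gamma}r^{\gamma}$ since $r\geq\lambda\kappa^{(1-m)/\theta}$. What your version buys is a cleaner, fully accounted-for constant and an explicit reconciliation of the rescaled intrinsic cylinders with the original ones, which the paper glosses over; what the paper's extra parameter $a>0$ buys is not needed here, since \eqref{borderline1} already guarantees the relevant exponent is positive at $a=0$.
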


\begin{proof}
Take
$$v(x,t) = \rho u \left( \rho^{a}x, \rho^{(m-1) + 2a}t \right) $$
with $\rho, a$ to be fixed, which solves
$$v_{t}- {\rm div} (m |v|^{m-1}\nabla v)= \rho^{m + 2a}f(\rho^{a}x, \rho^{(m-1)+ 2a)}t) = \tilde{f}(x,t).$$
We have
$$\Vert v \Vert_{\infty,G_{1}} \leq  \rho \Vert u \Vert_{\infty,G_{1}}$$
and
$$\Vert \tilde{f} \Vert_{L^{q,r}(G_{1})}^{r} = \rho^{(m + 2a)r - a(n\frac{r}{q} + 2) - (m-1)}\Vert f \Vert_{L^{q,r}(G_{1})}^{r}.$$
Choosing $a > 0$ such that
$$(m + 2a)r - a \left( \frac{nr}{q} + 2 \right) - (m-1) > 0,$$
which is always possible, and $0 < \rho \ll 1$, we enter the smallness regime required by Theorem \ref{pois}, i.e., 
$$\Vert v \Vert_{\infty,G_{1}} \leq 1  \qquad {\rm and} \qquad \Vert \tilde{f} \Vert_{L^{q,r}(G_{1})} \leq \epsilon.$$

\medskip

Now, given $0<r<\lambda$, there exists $k \in \N$ such that 
$$\lambda^{k+1} < r \leq \lambda^{k}.$$
Since $|u(0,0)|\leq \frac{1}{4} r^{\gamma} \leq  \frac{1}{4} (\lambda^{k})^{\gamma}$, it follows from Theorem \ref{pois} that
$$\Vert u  \Vert_{\infty, G_{\lambda^{k}}} \leq  (\lambda^k)^{\gamma}.$$
Then, for $C=\lambda^{-\gamma}$,
$$\Vert u  \Vert_{\infty, G_{r}} \leq \Vert u  \Vert_{\infty, G_{\lambda^{k}}} \leq  (\lambda^k)^{\gamma} < \left( \frac{r}{\lambda} \right)^{\gamma} = C\, r^{\gamma} .$$
\end{proof}

We now complete our study, with the main result of the paper.

\begin{theorem}
Let $u$ be a locally bounded weak solution of \eqref{pme} in $G_1$, with $f \in L^{q,r}$ satisfying $(\ref{borderline1})$.
Then $u$ is locally of class $C^{0,\gamma}$ in space and $C^{0,\frac{\gamma}{\theta}}$ in time, with
$$\gamma =\frac{\alpha}{m}  , \qquad \alpha =  \min\left\{\alpha_{0}^-, \frac{m[(2q - n)r -2q]}{q[mr - (m-1)]}\right\} .$$
Here $0< \alpha_{0} \leq 1$ denotes the optimal H\"older exponent for solutions of the homogeneous case and $\theta$ is given in \eqref{theta}. 
\end{theorem}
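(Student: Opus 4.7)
The plan is to upgrade Theorem~\ref{sim}---a pointwise intrinsic oscillation bound at the origin---into a genuine local H\"older estimate at every interior point of $U_T$, via translation, rescaling, and a dichotomy that separates a degenerate from a non-degenerate regime.

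First I would fix an interior point $(x_0,t_0)$ and translate it to the origin; the PDE and the $L^{q,r}$ norm of the source are translation invariant. Using the preliminary scaling already performed in the proof of Theorem~\ref{sim}, we may assume that $\|u\|_{\infty,G_1}\leq 1$ and $\|f\|_{L^{q,r}(G_1)}\leq \epsilon$. Set $h:=|u(0,0)|$ and define the critical radius $r^{*}:=(4h)^{1/\gamma}$. For every scale $r^{*}\leq r<\lambda$ the hypothesis of Theorem~\ref{sim} is met, and the triangle inequality gives
\[
|u(x,t)-u(0,0)| \,\leq\, \|u\|_{\infty,G_r}\,+\,h \,\leq\, C\,r^{\gamma},\qquad (x,t)\in G_r,
\]
already encoding the intrinsic $\gamma$-H\"older modulus on scales above $r^{*}$.

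On smaller scales $r<r^{*}$ the non-degeneracy of $u$ at the origin takes over. I would rescale at the critical scale via
\[
\tilde u(y,s)\,:=\,\frac{u\bigl(r^{*}y,\,(r^{*})^{\theta}s\bigr)}{(r^{*})^{\gamma}},
\]
which solves an equation of type~\eqref{pme} with source $\tilde f$ whose $L^{q,r}(G_1)$ norm remains controlled by $\epsilon$---the same computation as in the proof of Theorem~\ref{pois}, powered by the optimal choice of $\alpha$ in \eqref{alfa}. By construction $\tilde u$ is universally bounded on $G_1$ and $|\tilde u(0,0)|=1/4$, so the continuity estimate of \cite[Thm.~16.1]{DiBeGiaVes11} forces $|\tilde u|\geq 1/8$ on a smaller intrinsic cylinder $G_\eta$ with $\eta>0$ universal. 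There the coefficient $m|\tilde u|^{m-1}$ enjoys universal two-sided ellipticity bounds depending only on $m$, the equation is linearly uniformly parabolic with an $L^{q,r}$ right-hand side, and standard linear parabolic theory yields H\"older continuity of $\tilde u$ with an exponent $\beta=2(1-1/r-n/(2q))$, strictly larger than $\gamma$ precisely because $\alpha$ in \eqref{alfa} is chosen optimally. Unwinding the scaling and using $|x|\leq r^{*}$ and $|t|\leq (r^{*})^\theta$ throughout this range, the factor $(r^{*})^{\gamma-\beta}$ is absorbed by matching powers of $|x|$ or of $|t|^{1/\theta}$, and we recover the $\gamma$-H\"older modulus at these small scales as well.

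Combining the two regimes at the critical scale $r^{*}$ gives, at every interior point and for every sufficiently small $(x,t)$,
\[
|u(x,t)-u(0,0)|\,\leq\, C\bigl(|x|\,+\,|t|^{1/\theta}\bigr)^{\gamma},
\]
which encodes both the $C^{0,\gamma}$ regularity in space and, by freezing the spatial variable and recalling the intrinsic time scale $r^{\theta}$, the $C^{0,\gamma/\theta}$ regularity in time. I expect the main obstacle to be the non-degenerate alternative: one must (i) pin down the universal two-sided ellipticity on $\{|\tilde u|\geq 1/8\}$ in terms of $m$ alone, (ii) verify that the rescaled source $\tilde f$ remains admissible thanks to the exponent balance in \eqref{alfa}, and (iii) check that the classical H\"older exponent $\beta$ strictly exceeds $\gamma$ (and $\beta/2$ strictly exceeds $\gamma/\theta$), so that the two alternatives glue at the matching scale $r^{*}$ with no loss in the modulus of continuity.
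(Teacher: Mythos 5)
Your proposal matches the paper's own argument almost step for step: your critical radius $r^{*}=(4|u(0,0)|)^{1/\gamma}$ is exactly the paper's $\mu$, and the dichotomy between invoking Theorem~\ref{sim} on scales above $r^{*}$ and passing, via $\tilde u(y,s)=u(r^{*}y,(r^{*})^\theta s)/(r^{*})^\gamma$, to a non-degenerate uniformly parabolic regime below it is precisely the paper's route. The only cosmetic difference is that the paper isolates the intermediate gluing range $\mu\rho_0/2\leq r<\mu$ as an explicit third case, which you subsume into the phrase ``combining the two regimes at the critical scale.''
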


\begin{proof}
We study the H\"older continuity at the origin, proving there is a uniform constant $K$ such that 
\begin{equation}
\|u-u(0,0)\|_{\infty, G_r} \leq K r^\gamma.
\label{connor}
\end{equation}
We know, \textit{a priori}, that $u$ is continuous so we can define 
$$\mu:= (4| u(0,0) |)^{1/\gamma} \geq 0.$$
Take any radius $0<r<\lambda$. We analyse three alternative cases, exhausting all possibilities.

\medskip

\begin{itemize}

\item If $\mu \leq r<\lambda$ then, by Theorem \ref{sim},
\begin{equation}
\sup_{G_r} \left| u(x,t) - u(0,0)\right| \leq C\, r^{\gamma} + |u(0,0)| \leq \left( C+\frac{1}{4}\right) r^{\gamma}.
\label{uno}
\end{equation}

\medskip

\item If $0<r<\mu$, we consider the function 
$$w(x,t):=\frac{u(\mu x, \mu^\theta t)}{\mu^{\gamma}}.$$
Clearly, $|w(0,0)|=\frac14$ and $w$ solves in $G_1$ the pme
$$w_{t} - {\rm div} \left( m |w|^{m-1} \nabla w \right) = \mu^{2-\alpha} f(\mu x, \mu^\theta t).$$
Moreover, again using Theorem \ref{sim}, it follows that 
$$\| w  \|_{\infty, G_{1}}  = \mu^{-\gamma} \| u  \|_{\infty, G_{\mu}}  \leq C,$$
since $| u(0,0) | = \frac{1}{4} \mu^{\gamma}$. With this uniform estimate in hand, and using local $C^{0,\alpha}$ regularity estimates \cite{And91}, we find that there exists a radius $\rho_0$, depending only on the data, such that
$$| w (x,t)| \geq \frac{1}{8}, \quad \forall\, (x,t) \in G_{\rho_0}.$$
This implies that, in $G_{\rho_0}$, $w$ solves a uniformly parabolic equation of the form
$$w_{t} - {\rm div} \left( a(x,t) \nabla w \right) = f \in L^{q,r},$$
with continuous coefficients satisfying the bounds $0<c_1\leq a(x,t) \leq c_2$. In particular, we have (see \cite{TeiUrb14})
$$w \in C^{0,\beta} (G_{\rho_0}), \quad \mbox{with }  \ \beta = 1 - \left( \frac{2}{r} + \frac{n}{q} -1 \right) > \gamma,$$
which is the optimal H\"older regularity for solutions of the heat equation with a source in $L^{q,r}$, for exponents satisfying \eqref{borderline1}. 
As an immediate consequence,
$$\sup_{(x,t) \in G_r} \left| w(x,t) -  w(0,0) \right| \leq C\, r^{\beta}, \quad \forall \, 0<r<\frac{\rho_0}{2},$$
which, in terms of $u$, reads
$$\sup_{(x,t) \in G_r} \left| \frac{u(\mu x, \mu^\theta t)}{\mu^{\gamma}} - \frac{u(0,0)}{\mu^{\gamma}}\right| \leq C\, r^{\beta},  \quad \forall \, 0<r<\frac{\rho_0}{2}.$$
Since $\gamma < \beta$,  we conclude 
$$\sup_{(x,t) \in G_{\mu r}} \left| u(x,t)  - u(0,0)  \right| \leq C\, (\mu r)^{\gamma}, \quad \forall \, 0<\mu r<\mu\frac{\rho_0}{2},$$
and, relabelling, we obtain
\begin{equation}
\sup_{(x,t) \in G_{r}} \left| u(x,t)  - u(0,0)  \right| \leq C\, r^{\gamma}, \quad \forall \, 0<r<\mu\frac{\rho_0}{2}.
\label{dos}
\end{equation}

\medskip

\item Finally, for $\mu\frac{\rho_0}{2} \leq r < \mu$, we have
\begin{eqnarray}
\sup_{(x,t) \in G_{r}} \left| u(x,t)  - u(0,0)  \right| & \leq &   \sup_{(x,t) \in G_{\mu}} \left| u(x,t)  - u(0,0)  \right| \nonumber\\
& \leq & C\, \mu^{\gamma} \leq C \left( \frac{2r}{\rho_0} \right)^{\gamma}= \tilde{C} r^{\gamma} \label{tres}.
\end{eqnarray}

\end{itemize}

Putting $K=\max \left\{ C+\frac14,  \tilde{C} \right\}$ and combining \eqref{uno}--\eqref{tres}, we obtain \eqref{connor}, for every $0<r<\lambda$, and the proof is complete.
\end{proof}

\medskip

\noindent{\bf Acknowledgments.} D.J.A. supported by CNPq - Brazil. A.F.M. supported by CAPES - Brazil. J.M.U. partially supported by the Centre for Mathematics of the University of Coimbra -- UID/MAT/00324/2013, funded by the Portuguese government through FCT/MCTES and co-funded by the European Regional Development Fund through Partnership Agree\-ment PT2020.

\medskip

\bibliographystyle{amsplain, amsalpha}

\begin{thebibliography}{99}

\bibitem{And91} 
D. Andreucci,
{\it $L_{\rm loc}^\infty$-estimates for local solutions of degenerate parabolic equations},
SIAM J. Math. Anal. 22 (1991), 138--145. 

\bibitem{AraTeiUrb18} 
D.J. Ara\'ujo, E. V. Teixeira and J.M. Urbano, 
{\it A proof of the $C^{p'}$-regularity conjecture in the plane},  
Adv. Math. 316 (2017), 541--553.

\bibitem{AraTeiUrb17} 
D.J. Ara\'ujo, E. V. Teixeira and J.M. Urbano, 
{\it Towards the $C^{p'}$-regularity conjecture in higher dimensions},  
Int. Math. Res. Not. IMRN 2018 (2018), 6481--6495.

\bibitem{AraZhang17} 
D.J. Ara\'ujo and L. Zhang, 
{\it Interior $C^{1, \alpha}$ estimates for $p$-Laplacian equations with optimal regularity},  
Commun. Contemp. Math., to appear.

\bibitem{AroCaf86} 
D. G. Aronson and L.A. Caffarelli, 
{\it Optimal regularity for one-dimensional porous medium flow},
Rev. Mat. Iberoamericana 2 (1986), 357--366.
 
\bibitem{AroGra93} 
D. G. Aronson and J. Graveleau, 
{\it A self-similar solution to the focusing problem for the porous medium equation},
European J. Appl. Math. 4 (1993), 65--81.

\bibitem{BogDuzMin13} 
V. B\"ogelein, F. Duzaar and G. Mingione,
{\it The regularity of general parabolic systems with degenerate diffusion},
Mem. Amer. Math. Soc. 221 (2013), no. 1041, vi+143 pp.

\bibitem{CafVazWol87} 
L.A. Caffarelli, J.L. V\'azquez and N.I. Wolanski,
{\it Lipschitz continuity of solutions and interfaces of the N-dimensional porous medium equation},
Indiana Univ. Math. J. 36 (1987), 373--401. 

\bibitem{DiBe93}
E. DiBenedetto,
{\it Degenerate Parabolic Equations}, 
Universitext, Springer-Verlag, New York, 1993.

\bibitem{DiBeGiaVes11} 
E. DiBenedetto, U. Gianazza and V. Vespri,
{\it Harnack's Inequality for Degenerate and Singular Parabolic Equations},
Springer Monographs in Mathematics, Springer, New York, 2012.
 
\bibitem{DuzMin05} 
F. Duzaar and G. Mingione,
{\it Second order parabolic systems, optimal regularity, and singular sets of solutions},
Ann. Inst. H. Poincar\'e Anal. Non Lin\'eaire 22 (2005), 705--751.

\bibitem{GiaSil} 
U. Gianazza and J. Siljander,
{\it Sharp regularity for weak solutions to the porous medium equation},
 arXiv:1607.06924v1 [math.AP].

\bibitem{Iva99} 
A. V. Ivanov, 
{\it Gradient estimates for doubly nonlinear parabolic equations},
Zap. Nauchn. Sem. S.-Peterburg. Otdel. Mat. Inst. Steklov. (POMI) 233 (1996), Kraev. Zadachi Mat. Fiz. i Smezh. Vopr. Teor. Funkts. 27, 63--100, 256; translation in J. Math. Sci. (New York) 93 (1999), 661--688.

\bibitem{TeiUrb14} 
E. V. Teixeira and J.M. Urbano, 
{\it A geometric tangential approach to sharp regularity for degenerate evolution equations},  
Anal. PDE 7 (2014), 733--744.

\bibitem{Urb08}
J.M. Urbano,
{\it The Method of Intrinsic Scaling},
Lecture Notes in Mathematics 1930, Springer-Verlag, Berlin, 2008.

\bibitem{Vaz07} 
J.L. V\'azquez, 
{\it The Porous Medium Equation. Mathematical Theory},
Oxford Mathematical Monographs, Oxford University Press, Oxford, 2007.

\end{thebibliography}

\end{document}